\newcommand{\mo}{\R^{n\times n}_{zd}}
\newcommand{\hip}{\{x_n=0\}}
\newcommand{\sucen}[2]{{#1_{1},\hdots,#1_{#2}}}
\newcommand{\realamp}{\R\cup\{-\infty\}}
\newcommand{\colut}[3]{\left[\begin{array}{r}#1\\#2\\#3
\end{array}\right]}
\newcommand{\colud}[2]{\left[\begin{array}{r}#1\\#2
\end{array}\right]}
\newcommand{\m}{\medskip}
\newcommand{\N}{\mathbb{N}}
\newcommand{\R}{\mathbb{R}}
\newcommand{\PP}{\mathcal{P}}
\renewcommand{\int}{\operatorname{int}}
\newcommand{\dd}{\operatorname{d}}
\newcommand{\dH}{\operatorname{d_H}}
\newcommand{\rr}{\operatorname{r}}
\newcommand{\col}{\operatorname{col}}
\newcommand{\card}{\operatorname{card}}
\newcommand{\spann}{\operatorname{span}}
\renewcommand{\labelenumi}{\theenumi}
\renewcommand{\labelenumi}{(\roman{enumi})}
\newtheorem{thm}{Theorem}
\newtheorem{lem}[thm]{Lemma}
\newtheorem{ex}[thm]{Example}
\title{On tropical Kleene star matrices and alcoved polytopes
}
\author{
M.J. de la Puente
}
\date{} 
\begin{document}
\maketitle

Keywords and phrases:  tropical algebra, Kleene star,   normal matrix, idempotent matrix,  alcoved polytope, convex set, norm.

\begin{abstract}
In this paper we give a short, elementary proof of a known  result in tropical mathematics, by which the convexity of the column span of a  zero--diagonal real matrix $A$ is characterized by  $A$ being a Kleene star. We give  applications to alcoved polytopes, using normal idempotent matrices (which form a subclass of Kleene stars). For a normal matrix we define a norm  and show that this is the radius of a hyperplane section of its tropical span.
\end{abstract}
\section{Introduction}
Tropical algebra (also called max--algebra, extremal algebra, etc.) is a linear algebra performed with  the so called tropical operations:  $\max$ (for addition) and $+$ (for multiplication)---though some variations  use $\min$ instead of $\max$, or ordinary multiplication as tropical multiplication. The study of tropical algebra   began in the 60's and 70's with the works of Cuninghame--Green, Gondran--Minoux,  Vorobyov, Yoeli and K. Zimmermann and has received a fabulous push since the 90's. Today it  ramifies into other areas such as algebraic geometry and mathematical analysis. Tropical algebra began as a means to mathematically model processes which involve synchronization of machines. Applications to such practical problems are still pursued today.

\m
A basic problem in tropical algebra is to determine the properties (classical or tropical) of the
set  $V$  spanned (by means of tropical operations) by  $m$ given points  $\sucen am$ in $\R^n$. The properties of $V$ follow
from the properties of the $n\times m$ real matrix $A$ given by the
coordinates of the $a_j$ written in columns. In this setting, $V$ is denoted $\spann(A)$. It is always a connected, compact set, and most often it is non--convex, in the classical sense.
Convexity--related questions about $\spann(A)$ have drawn  the attention of  various authors; see \cite{Develin_Sturm,Jimenez_P,Joswig_K,Sergeev}, as well as \cite{Izhakian_al_Idempotent,Izhakian_al_Pure}.

\m
Assume $m=n$. Kleene operators (also called Kleene stars or Kleene closures) are well--known in mathematical logic and computer science. For matrices in tropical algebra, Kleene stars (meaning matrices which are Kleene stars of other matrices) form a particularly well--behaved class.
They are simply characterized in terms of linear equalities and inequalities.  
For a given  matrix  $A$, it is customary for authors to obtain properties of $A$ (and $\spann(A)$) from properties of the directed graph $G_A$ associated to $A$; see
\cite{Akian_HB,Baccelli,Butkovic_Libro,Cuninghame,Cuninghame_New, Zimmermann_K}. For example,  the tropical  (or max--algebraic) principal eigenvalue $\lambda(A)$ of $A$ is the maximum cycle mean of $G_A$. But if $A$ is a Kleene star,  then properties of $\spann(A)$ follow directly from $A$: we need not consider $G_A$.

\m
Alcoved polytopes form a very natural class of generally non--regular convex polytopes, including hypercubes and cross polytopes. They have been studied in  \cite{Lam_Postnikov,Lam_PostnikovII,Werner_Yu}. An alcoved polytope  directly arises from a Kleene star matrix.

\m
In this note we prove, by elementary handling of inequalities, the following known result:
 for any zero--diagonal real matrix $A$, $A$ is a Kleene star if and only if $\spann(A)$ is convex. Since a certain hyperplane section of $\spann(A)$ is an alcoved polytope,  we are able to obtain some applications to these. One application is the possibility of using tropical operations in order  to compute the numerous extremals (vertices and pseudovertices) of a given alcoved polytope. Another application is a way to improve the presentation of an alcoved polytope. A third application is the computation of the radius of an alcoved polytope.

\section{Kleene stars, column spans and normal idempotent matrices}

\m
Write $\oplus=\max$ and $\odot=+$. These are the tropical operations addition and multiplication.
For $n\in\N$, set $[n]:=\{1,2,\ldots,n\}$.
Let  $\R^{n\times m}$ denote the set of  real matrices having $n$ rows and $m$ columns. Define tropical sum and product of matrices following the same rules of classical linear algebra, but replacing addition (multiplication) by tropical addition (multiplication).
We will never use classical sum or multiplication of matrices, in this note; therefore, $A\odot B, A\odot A$ will be written $AB, A^2$, respectively,  for matrices $A,B$. Besides, we will never use the classical linear span.


\m
We will write the coordinates of points in $\R^n$ in columns.
Let $A\in \R^{n\times m}$ and denote by $a_1,\ldots,a_m\in \R^n$ the columns of $A$.
The \emph{tropical column span} of $A$ is, by definition,
\begin{eqnarray}
\spann(A):&=\{(\lambda_1+a_1)\oplus\cdots\oplus (\lambda_m+a_m) \in \R^n: \lambda_1,\ldots,\lambda_m\in\R\}\\
\nonumber&=\max\{\lambda_1+a_1,\ldots,\lambda_m+a_m: \lambda_1,\ldots,\lambda_m\in\R\}
\end{eqnarray}
where maxima are computed coordinatewise. For instance, $$\left(3+\colud{-2}{1}\right)\oplus\left( 0 +\colud{2}{1}\right)=\colud{1}{4}\oplus\colud{2}{1}=\colud{2}{4}, \text{\ so\ that\ } \colud{2}{4}\in \spann\left[\begin{array}{rr}
-2&2\\
1&1
\end{array}\right].$$

Notice that, by definition, the set $\spann(A)$ is closed under classical addition  of the vector $(\lambda,\ldots,\lambda)$, for $\lambda\in\R$. Therefore, a hyperplane section of it, such as $\spann(A)\cap\hip$ determines $\spann(A)$.

We will mostly consider real zero--diagonal square matrices, in this paper. The set of such matrices will be denoted $\mo$.
For $A=(a_{ij})\in \mo$, consider the matrix $A_0=(\alpha_{ij})$, \label{dfn:alpha} where
\begin{equation}\label{eqn:alpha}
\alpha_{ij}=a_{ij}-a_{nj},
\end{equation}
whence $\col(A_0,j)=-a_{nj}+\col(A,j)$. The columns of $A_0$ belong to the hyperplane $\hip$ and are tropical scalar multiples of the columns of $A$, so that
\begin{equation}\label{eqn:span}
\spann(A)=\spann(A_0).
\end{equation}
Thus, $x\in\spann(A)\cap \hip$ if and only if there exist $\sucen \mu n\in\R$ such that
\begin{align}
x_j=&\max_{k\in[n]}\{\alpha_{jk}+\mu_k\},\quad j\in[n-1],\label{eqn:max_otra}\\
0=&\max_{k\in[n]}\mu_k,\label{eqn:max_l}
\end{align}
so that $x$ is a combination of the columns of $A_0$ with coefficients $\mu_j$ (tropically) adding up to zero.

\m
  By definition (see \cite{Butkovic_Sch_Ser,Sergeev,Sergeev_Sch_But}), $A\in\mo$ is \emph{a Kleene star} if  $A=A^2$ (i.e., $A$ is zero--diagonal and idempotent, tropically). If each diagonal entry of $A=(a_{ij})$ vanishes, then  $A\le A^2$, because   for each $i,j\in[n]$, we have
$$a_{ij}\le \max_{k\in[n]} a_{ik}+a_{kj}=(A^2)_{ij}.$$
Therefore, being a Kleene star is characterized by the following  $n$ linear equalities and ${n\choose 2}+{n\choose 3}=\frac{n^3-n}{6}$
linear inequalities:
\begin{equation}\label{eqn:Kleene}
a_{ii}=0,\quad a_{ik}+a_{kj}\le a_{ij}, \quad i,j,k\in[n], \quad \card\{i,j,k\}\ge2.
\end{equation}
In particular, $a_{ik}+a_{ki}\le0$, for $i,k\in[n]$.

\m

By definition, an \emph{alcoved polytope} $\PP $ in $\R^{n-1}$ is a convex polytope defined by inequalities $a_{i}\le x_i\le b_{i}$ and $a_{ik}\le x_i-x_k\le b_{ik}$ , for some $i,k\in[n-1]$, $i\neq k$, and $a_i, b_i,a_{ik}, b_{ik}\in\R\cup\{\pm\infty\}$. 
The polytope $\PP $ may have up to ${2n-2}\choose {n-1}$ extremals (in the sense of classical convexity) and this bound is sharp; see \cite{Develin_Sturm}. This is a fast--growing number, since
$${{2n}\choose {n}}\simeq\frac{4^n}{\sqrt{\pi n}},$$ as ${n\to\infty}$, by Stirling's formula. For instance, for $n=10$, $\PP$ may have up to 48.620 extremals.

A matrix $A\in\mo$  induces the following  (possibly empty!) alcoved polytope  in $\R^{n-1}$
\begin{equation}\label{eqn:C_A}
C_A:=\left\{x\in\R^{n-1}:\  {{a_{in}\le x_i\le-a_{ni}}\atop {a_{ik}\le x_i-x_k\le-a_{ki}}};\  i,k\in[n-1], i\neq k\right\}.
\end{equation}

\m
Throughout the paper, we \emph{identify} $\R^{n-1}$ with the hyperplane $\{x_n=0\}$ in $\R^n$. Our main result is

\begin{thm}\label{thm:main}
For any $A\in \mo$, the following are equivalent:
\begin{enumerate}
\item $A$ is a Kleene star,\label{item:1}
\item  $C_A=\spann(A)\cap \hip$.\label{item:2}
\end{enumerate}
\end{thm}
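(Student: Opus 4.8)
The plan is to split Theorem~\ref{thm:main} into its two inclusions and to notice that one of them is free. First I would show that $C_A\subseteq\spann(A)\cap\hip$ for \emph{every} $A\in\mo$, with no appeal to idempotency. Given $x\in C_A$, regarded as a point of $\hip$ so that $x_n=0$, I would exhibit explicit coefficients realizing $x$ as a tropical combination of the columns of $A$, namely the largest ones satisfying $\lambda_k+a_{ik}\le x_i$, that is $\lambda_k:=\min_{j\in[n]}(x_j-a_{jk})$. By construction $\lambda_k+a_{ik}\le x_i$ for all $i,k$, so $\max_k(\lambda_k+a_{ik})\le x_i$; and taking $k=i$ (using $a_{ii}=0$) together with the defining inequalities of $C_A$ in \eqref{eqn:C_A}, which in particular give $x_j-a_{ji}\ge x_i$ for every $j$, forces $\lambda_i\ge x_i$, hence equality. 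The analogous computation in the $n$-th coordinate, using the bounds $x_j\ge a_{jn}$, gives $\max_k(\lambda_k+a_{nk})=0=x_n$. Thus $x\in\spann(A)\cap\hip$, and this direction is genuinely free.

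With this inclusion recorded, the theorem reduces to the assertion that $\spann(A)\cap\hip\subseteq C_A$ holds if and only if $A$ is a Kleene star. For \ref{item:1}$\Rightarrow$\ref{item:2} it remains to establish this inclusion under the hypothesis $A=A^2$. Here I would take $x\in\spann(A)\cap\hip$, write it as $x_i=\max_k(\lambda_k+a_{ik})$ subject to $\max_k(\lambda_k+a_{nk})=0$ (the analogue of \eqref{eqn:max_otra}--\eqref{eqn:max_l} kept in the entries of $A$ itself, legitimate by \eqref{eqn:span}), and verify each defining inequality of $C_A$ by hand. Each verification selects the index attaining a relevant maximum and then invokes a single Kleene inequality from \eqref{eqn:Kleene}: the bound $x_i\le -a_{ni}$ comes from $a_{ni}+a_{ik}\le a_{nk}$; the bound $x_i\ge a_{in}$ from $a_{in}+a_{nk_0}\le a_{ik_0}$, where $k_0$ attains $\max_k(\lambda_k+a_{nk})$; and the two difference bounds $a_{ik}\le x_i-x_k\le -a_{ki}$ from $a_{ik}+a_{kl}\le a_{il}$ and $a_{ki}+a_{il}\le a_{kl}$ respectively. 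This is the most computational step of the proof.

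For the converse \ref{item:2}$\Rightarrow$\ref{item:1} I would argue through the columns of $A_0$. Each column $\col(A_0,k)$ has $n$-th entry $a_{nk}-a_{nk}=0$, so it lies in $\hip$, and it trivially belongs to $\spann(A_0)$---put coefficient $0$ on column $k$ and sufficiently negative finite coefficients on the others---hence to $\spann(A)$ by \eqref{eqn:span}. Thus $\col(A_0,k)\in\spann(A)\cap\hip$ for every $k$. Assuming \ref{item:2}, all these columns lie in $C_A$. Writing out the condition $\col(A_0,k)\in C_A$ and cancelling the common term $a_{nk}$, each of its inequalities becomes exactly one inequality of \eqref{eqn:Kleene}; letting $i,k$ range over all admissible indices recovers every Kleene inequality, including the special cases $a_{ik}+a_{ki}\le 0$. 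Hence ``all columns of $A_0$ lie in $C_A$'' is equivalent to $A$ being a Kleene star, and \ref{item:2} forces \ref{item:1}.

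The conceptual point is that the inclusion $C_A\subseteq\spann(A)\cap\hip$ costs nothing, so the entire content of the equivalence resides in the reverse inclusion. I expect the main obstacle to be the bookkeeping in the step $\spann(A)\cap\hip\subseteq C_A$: one must pair each facet inequality of $C_A$ with the correct Kleene inequality and correctly identify the index attaining each maximum. Once the column observation is isolated, the direction \ref{item:2}$\Rightarrow$\ref{item:1} is a mere dictionary between the two inequality systems and presents no real difficulty.
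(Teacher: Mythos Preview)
Your proof is correct. The free inclusion $C_A\subseteq\spann(A)\cap\hip$ and the direction \ref{item:2}$\Rightarrow$\ref{item:1} via the columns of $A_0$ match the paper's Lemma~\ref{lem:2} and the paper's auxiliary condition~(iii) (``each column of $A_0$ lies in $C_A$'') almost verbatim; only the explicit coefficients differ (the paper takes $\mu_i=x_i+a_{ni}$ rather than your $\lambda_k=\min_j(x_j-a_{jk})$, but both work).

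The genuine difference is in \ref{item:1}$\Rightarrow$\ref{item:2}. You verify the inclusion $\spann(A)\cap\hip\subseteq C_A$ \emph{directly}: for an arbitrary tropical combination you pick, for each facet inequality of $C_A$, the index attaining the relevant maximum and invoke one Kleene inequality from \eqref{eqn:Kleene}. The paper instead isolates a structural lemma (Lemma~\ref{lem:3}): $C_A$ is tropically convex, i.e.\ closed under two--point tropical combinations. It then uses (i)$\Leftrightarrow$(iii) to place the columns of $A_0$ inside $C_A$ and iterates Lemma~\ref{lem:3} to absorb an arbitrary $n$--term combination. Your route is shorter and avoids the induction on the number of terms; the paper's route buys a reusable fact---tropical convexity of $C_A$ holds for \emph{every} $A\in\mo$, not just Kleene stars---which it reuses later in the proof of Theorem~\ref{thm:radius}. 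Either argument is a clean proof of Theorem~\ref{thm:main}.
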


To prove this theorem we need two lemmas. Given two points $x,y\in \R^n$, let $B\in\R^{n\times 2}$ be the matrix whose columns are $x$ and $y$. The set $\spann(B)$ is called the \emph{tropical segment} joining $x$ and $y$ (not to be confused with the tropical line determined by $x$ and $y$).


\begin{lem}\label{lem:2}
If $A\in \mo$, then  $C_A\subseteq \spann(A)\cap\hip$.
\end{lem}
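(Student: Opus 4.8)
The plan is to take an arbitrary $x\in C_A$ and \emph{exhibit} explicit tropical coefficients realizing $x$ as a tropical combination of the columns of $A$, which places $x$ in $\spann(A)$. Membership in $\hip$ is free: under the identification of $\R^{n-1}$ with $\{x_n=0\}$, every point of $C_A$ already has vanishing $n$-th coordinate, so it suffices to prove $x\in\spann(A)$. The whole argument is thus reduced to producing good coefficients and checking finitely many inequalities.

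The key guess is to use the coordinates of $x$ themselves as coefficients: set $\lambda_k=x_k$ for $k\in[n]$ (recall $x_n=0$) and claim that
\[
x_j=\max_{k\in[n]}\{\lambda_k+a_{jk}\},\qquad j\in[n].
\]
In the normalized form of \eqref{eqn:max_otra}--\eqref{eqn:max_l} this corresponds to taking $\mu_k=x_k+a_{nk}$. The verification splits, for each $j$, into two inequalities. The bound $x_j\le\max_k\{\lambda_k+a_{jk}\}$ is automatic and uses only the zero diagonal, since the term with $k=j$ equals $x_j+a_{jj}=x_j$. The reverse bound $x_j\ge\lambda_k+a_{jk}$ for every $k$, i.e. $x_j-x_k\ge a_{jk}$, is precisely where the defining inequalities of $C_A$ in \eqref{eqn:C_A} are consumed.

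The main point to watch is that $C_A$ treats the index $n$ asymmetrically, so the reverse bound must be checked case by case. For $j,k\in[n-1]$ with $j\neq k$ it is the lower off-diagonal inequality $a_{jk}\le x_j-x_k$; for $j\in[n-1]$, $k=n$ it reads $x_j\ge a_{jn}$, the lower single-variable bound $a_{in}\le x_i$; and for $j=n$, $k\in[n-1]$ it reads $-x_k\ge a_{nk}$, i.e. the upper single-variable bound $x_k\le -a_{nk}$. The diagonal cases $j=k$ are trivial. Since every needed instance is covered by one of these bounds, we obtain equality $x_j=\max_k\{\lambda_k+a_{jk}\}$ for all $j\in[n]$; in particular the $j=n$ equality gives $0=\max_k\mu_k$. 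Hence $x\in\spann(A)$, and as $x_n=0$ we conclude $x\in\spann(A)\cap\hip$, proving $C_A\subseteq\spann(A)\cap\hip$. The only real subtlety is the case bookkeeping for the last index; no genuine obstacle arises once the coefficients $\lambda_k=x_k$ are chosen.
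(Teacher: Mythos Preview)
Your proof is correct and follows essentially the same approach as the paper's: the paper also takes $\mu_n=0$ and $\mu_i=x_i+a_{ni}$ for $i\in[n-1]$ (equivalently, your $\lambda_k=x_k$) and then asserts that \eqref{eqn:max_otra} and \eqref{eqn:max_l} hold by the $n(n-1)$ defining inequalities of $C_A$. The only difference is expository: you spell out the case bookkeeping for the index $n$ explicitly, whereas the paper leaves this verification to the reader in a single sentence.
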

\begin{proof}
Given  $x=(x_1,\ldots,x_{n-1})^t\in C_A$, write $x_n=0$ and consider scalars $\mu_n=0$ and $\mu _i=x_i+a_{ni}\le0$, for $i\in[n-1]$. Then (\ref{eqn:max_otra}) and (\ref{eqn:max_l})  hold true,  due to (\ref{eqn:alpha}) and to the
$n(n-1)$ inequalities defining $C_A$. Thus, $x\in\spann(A)\cap\hip$.
\end{proof}

\begin{lem}[Tropical convexity of $C_A$] \label{lem:3} If $A\in \mo$, then $\spann(B)\cap \hip\subseteq C_A$,
for every $x,y$ in $C_A$.
\end{lem}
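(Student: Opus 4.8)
The plan is to take an arbitrary point $z\in\spann(B)\cap\hip$ and verify that it satisfies each of the $n(n-1)$ inequalities defining $C_A$ in (\ref{eqn:C_A}). Since $x,y\in C_A$ lie in the hyperplane $\hip$, we have $x_n=y_n=0$, so a generic point of $\spann(B)$ has coordinates $z_i=\max\{\lambda+x_i,\mu+y_i\}$, $i\in[n]$, for suitable $\lambda,\mu\in\R$. The crucial first observation is that the normalization $z_n=0$ forces $\max\{\lambda,\mu\}=0$; that is, $\lambda\le0$ and $\mu\le0$, with at least one of the two equal to zero. This single reduction drives the whole argument.

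Next I would establish the single--coordinate bounds $a_{in}\le z_i\le-a_{ni}$ for $i\in[n-1]$. For the upper bound, $\lambda,\mu\le0$ give $\lambda+x_i\le x_i\le-a_{ni}$ and $\mu+y_i\le y_i\le-a_{ni}$, using $x,y\in C_A$; hence their maximum $z_i$ is at most $-a_{ni}$. For the lower bound, whichever of $\lambda,\mu$ equals zero makes $z_i$ dominate the corresponding coordinate, so $z_i\ge x_i\ge a_{in}$ or $z_i\ge y_i\ge a_{in}$.

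Finally, for the difference bounds $a_{ik}\le z_i-z_k\le-a_{ki}$ with $i\neq k$ in $[n-1]$, I would argue by a case split on which term attains the maximum in $z_i$. If $z_i=\lambda+x_i$, then bounding $z_k\ge\lambda+x_k$ yields $z_i-z_k\le x_i-x_k\le-a_{ki}$; if instead $z_i=\mu+y_i$, the same estimate with $y$ in place of $x$ gives $z_i-z_k\le y_i-y_k\le-a_{ki}$. Exchanging the roles of $i$ and $k$ produces the companion inequality $z_i-z_k\ge a_{ik}$, and both differences are controlled.

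This case analysis on the maxima is the only step requiring any care---it is exactly the elementary mechanism underlying tropical convexity---but it poses no real obstacle. I would note that the argument uses only that $x$ and $y$ \emph{separately} satisfy the inequalities defining $C_A$, and never the Kleene--star hypothesis, which is consistent with the lemma being asserted for every $A\in\mo$.
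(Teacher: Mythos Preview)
Your proof is correct and follows essentially the same route as the paper's: write $z_i=\max\{\lambda+x_i,\mu+y_i\}$ with $\max\{\lambda,\mu\}=0$, then verify the single--coordinate and difference inequalities defining $C_A$ by casing on which term attains each maximum. Your treatment of the difference bounds---splitting only on which term realizes $z_i$ and then swapping $i$ and $k$ for the companion inequality---is in fact slightly more economical than the paper's explicit enumeration of the cases for both $z_i$ and $z_k$.
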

\begin{proof} Assume that $x,y\in C_A$.
 A point $z$ in $\spann(B)\cap\hip$
has coordinates $z_n=0=\max\{\lambda,\mu\}$ and
\begin{equation*}
z_i=\max\{\lambda+x_i,\mu+y_i\},\quad i\in[n-1],
\end{equation*}
for  some $\lambda,\mu\in\R$.

Say $\lambda=0, \mu\le0$; then
$$x_i\le \max\{x_i,\mu+y_i\}=z_i\le \max\{x_i,y_i\},\quad i\in[n-1],$$ so that
$$a_{in}\le z_i\le -a_{ni},\quad i\in[n-1].$$

Moreover, if $i,k\in[n-1]$, $i\neq k$,
we have
$$z_i-z_k=\left\{\begin{array}{ll}
x_i-x_k,&\text{\ if\ }x_i=z_i,\ x_k=z_k,\\
y_i-y_k,&\text{\ if\ }\mu+y_i=z_i,\ \mu+y_k=z_k,\\
\end{array}\right.$$
and
$$x_i-x_k\le z_i-z_k=\mu+ y_i-x_k\le y_i-y_k,$$
if $\mu+y_i=z_i$, $x_k=z_k$.
In any case, we get
$$a_{ik}\le  z_i-z_k\le-a_{ki}.$$
\end{proof}

\m
Now we go to the proof of theorem \ref{thm:main},  showing that (i) and (ii) are also equivalent to
\begin{enumerate}\setcounter{enumi}{2}
\item each column of $A_0$ belongs to $C_A$.\label{item:3}
\end{enumerate}

\begin{proof}
Recall that $A_0=(\alpha_{ij})$, where $\alpha_{ij}=a_{ij}-a_{nj}$. Then, for $i,j\in[n]$,

\renewcommand{\labelenumi}{(\alph{enumi})}
\begin{enumerate}
\item $\alpha_{ni}=0$,  $\alpha_{in}=a_{in}$ and $\alpha_{ii}=-a_{ni}$,
\item $\alpha_{ij}-\alpha_{jj}=a_{ij}$.
\end{enumerate}

If $A$ is a Kleene star, then  $a_{ii}=0$ and $a_{ik}+a_{kj}\le a_{ij}$, so that
\begin{enumerate} \setcounter{enumi}{2}
\item $a_{in}\le\alpha_{ij}\le-a_{ni}$, \label{it:cota1}
\item $a_{ik}\le\alpha_{ij}-\alpha_{kj}=a_{ij}-a_{kj}\le-a_{ki}$. \label{it:cota2}
\end{enumerate}
Items  (c)  and (d)  mean precisely that  each column of $A_0$ belongs to $C_A$, so  we have that
\renewcommand{\labelenumi}{(\roman{enumi})}
 \ref{item:1} is equivalent to \ref{item:3}.

The coordinates $(\sucen x{n-1},0)^t$ of a point $x$ in $\spann(A)\cap\hip$  satisfy  $x_j=\max_{k\in[n]}\{\alpha_{jk}+\mu_k\}$, with $0=\max_{k\in[n]}\mu_k$. Say, without loss of generality,  $\mu_1=0$ and write
\begin{equation*}
x=z\oplus (\mu _3+\col(A_0,3))\oplus\cdots\oplus (\mu _n+\col(A_0,n)),
\end{equation*} with
$z=\col(A_0,1)\oplus (\mu _2+\col(A_0,2))$.
Assuming \ref{item:3}, then $z$  lies in $C_A$, by lemma \ref{lem:3}.  Again by lemma \ref{lem:3}, in finitely many steps,
we show that $x$ lies in $C_A$. Thus,  \ref{item:3} implies \ref{item:2}, by lemma \ref{lem:2}. And \ref{item:2} implies \ref{item:3}, because $\spann(A)=\spann(A_0)$.


\end{proof}

Theorem \ref{thm:main} and its proof deal with linear inequalities and maxima, because the equivalence between conditions \ref{item:1} and \ref{item:2}  can be restated as
$$(\ref{eqn:Kleene}) \Leftrightarrow [x\in C_A \Leftrightarrow \exists \mu_1,\ldots,\mu_n \text{\ such \ that\ \ }(\ref{eqn:max_otra}) \text{ \ and\ } (\ref{eqn:max_l})]$$ and $x\in C_A$ (see (\ref{eqn:C_A})) depends on inequalities.

\m

The convex set $C_A\subseteq \R^{n-1}=\{x_n=0\}$   gives rise to another  convex subset in $\R^n$ as follows: $\overline{C_A}=\{(x,0)+(\lambda,\ldots,\lambda):x\in C_A, \lambda\in \R\}$, the Minkowski sum of $C_A$ and a line. It is obvious that
\begin{enumerate}\setcounter{enumi}{3}
\item $\overline{C_A}=\spann(A)$
\end{enumerate}
is equivalent to \ref{item:2} in theorem \ref{thm:main}.

\m
Theorem \ref{thm:main} (and its equivalent item (iv)) is closely related to Sergeev's  section 3.1 in \cite{Sergeev} (please note that the notation in \cite{Sergeev} is multiplicative ---i.e., $\odot$ is the usual multiplication). In particular,  see top of p. 324 and propositions 3.4, 3.5 and 3.6. In terms of that work, we are proving that a zero--diagonal matrix $A$ is a Kleene star if and only if  its column span equals its subeigenvector cone (denoted $V^*(A)$ in \cite{Sergeev} and $\overline{C_A}$ here). In proposition 3.4 in \cite{Sergeev},  the assumption is that $A$ is definite, meaning  that $\lambda(A)=0$. In proposition 3.5, the assumption is that $A$ is strongly definite, meaning that $\lambda(A)=0$ and $a_{ii}=0$, $i\in[n]$. There,  $\lambda(A)$ denotes the maximum cycle mean of $A$,  the cycles referring to the directed graph  $G_A$.  And $\lambda(A)$ happens to be the unique eigenvalue of $A$.  Sergeev's result and proof  can also be found in p.26 of \cite{Butkovic_Libro}.
Unlike in \cite{Butkovic_Libro,Sergeev}, we are not using the terminology of max--plus spectral theory or multi--order convexity to present or explain our main result (although this is possible too). Moreover, we are not assuming anything about $\lambda(A)$.

\m
Theorem \ref{thm:main} is  also related to  proposition 3.6 in \cite{Werner_Yu}, where a different concept of generating set for an alcoved polytope is considered (please note that in \cite{Werner_Yu}, $\oplus$ means minimum).

\m

A \emph{first application} to alcoved polytopes $\PP \subset \R^{n-1}$  goes as follows. Remember that  $\PP $ is a convex set (in the classical sense) having a large number  $s$ of extremals:  $s\le {{2n-2}\choose {n-1}}$. If $\PP =C_A$ for some Kleene star $A\in\mo$,  we know that $\PP $  is tropically spanned by  the $n$ columns of $A_0$. The columns of $A_0$ are extremals of $\PP$ of course, the advantage being that  the remaining $s-n$  extremals of $\PP$  can be computed from $A_0$, using a tropical algorithm, such as  \cite{Allamigeon_al}. Some authors call \emph{vertices} to the columns of $A_0$ and  \emph{pseudovertices} to the  remaining $s-n$  extremals of $\PP$.

\begin{ex}\label{ex:1}
The alcoved polytope $\PP \subset \R^2$ (see figure \ref{fig:01}, left) given by
$$-1\le x\le 3, \quad -2\le y\le 6,\quad -4\le y-x\le 5$$
satisfies $\PP =C_A$, with
$$A=\left[\begin{array}{rrr}
0&-5&-1\\-4&0&-2\\-3&-6&0
\end{array}\right], \quad A_0=\left[\begin{array}{rrr}
3&1&-1\\-1&6&-2\\0&0&0
\end{array}\right].$$
Since $A=A^2$,   then $\PP $ is  spanned by the columns of $A_0$. In particular, the three columns of $A_0$ are extremals of $\PP $. The other  three extremals of $\PP $ are combinations of these. To be precise,
$$\colut{3}{6}{0}=\colut{3}{-1}{0}\oplus\colut{1}{6}{0},\colut{-1}{4}{0}=-2+\colut{1}{6}{0}\oplus\colut{-1}{-2}{0},\colut{2}{-2}{0}=
\colut{-1}{-2}{0}\oplus-1+\colut{3}{-1}{0}.$$
\end{ex}

\begin{figure}[H]
 \centering
  \includegraphics[width=10cm,keepaspectratio]{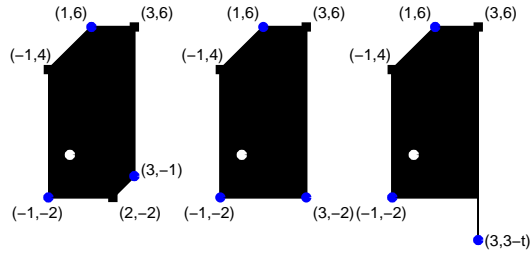}\\
  \caption{Alcoved polytopes in examples \ref{ex:1}, \ref{ex:3} and \ref{ex:5}. Generators are rounded (in blue), other extremals are squared (in black), the origin is marked (in white).}
  \label{fig:01}
  \end{figure}

 \begin{ex}\label{ex:2}
 Let $\PP =C_A\subset \R^3$ (see figure \ref{fig:02}), where $$A=\left[\begin{array}{rrrr}
    0&-6&-10&-5\\
    -8&0&-5&-3\\
    -3&-5&0&-6\\
    -5&-3&-6&0\\
    \end{array}\right],\quad A_0=\left[\begin{array}{rrrr}
    5&-3&-4&-5\\
    -3&3&1&-3\\
    2&-2&6&-6\\
    0&0&0&0\\
    \end{array}\right].$$
 Since $A=A^2$, then the columns of $A_0$ span $\PP $, i.e, they are extremals of $\PP $ and every other extremal of $\PP $ can be computed tropically from them (as tropical combinations).
    It can be checked (with the help of a computer program) that $C_A$ has $17<{6\choose3}=20$ extremals: the coordinates of the remaining 13 extremals  are the columns of the matrix
$$\left[\begin{array}{rrrrrrrrrrrrrrrrr}
   -5&    -3&     5&     5&     1&     5&    -3&    -3&    -4& -5&    -5&    -5&    -5\\
        1&    -1&     3&     3&     3&     1&    -3&     3&     2&   1&    -1&     0&    -3\\
        5&    -6&     6&     2&    -2&     6&    -6&     6&     6& -4&    -6&     5&     2\\
         0&     0&     0&     0&     0&     0&     0&     0&     0&   0&     0&     0&     0
\end{array}\right]$$
\end{ex}
\begin{figure}[H]
 \centering
  \includegraphics[width=11cm,keepaspectratio]{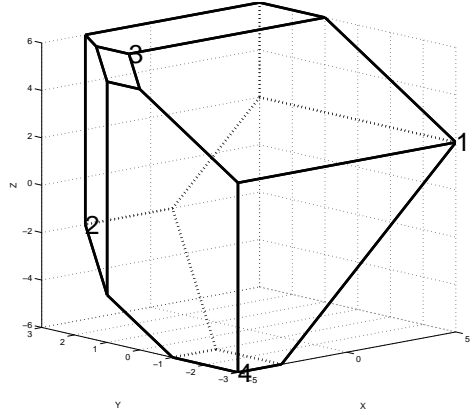}\\
  \caption{Alcoved polytope from example \ref{ex:2}. The columns of $A_0$ are marked  with digits $1,2,3$ and 4.}
  \label{fig:02}
  \end{figure}

 Theorem \ref{thm:main} deals with Kleene stars, but we prefer to work with  a subclass of particularly nice matrices. These are the \emph{normal idempotent matrices} (NI, for short). By definition, a real matrix $A=(a_{ij})$ is \emph{normal} if  $a_{ii}=0$, $a_{ij}\le 0$, all $i,j\in [n]$; see \cite{Butkovic_Libro}.  Notice that if $A$ is NI, then  $a_{ik}+a_{kj}\le a_{ij}$, for all $i,j,k\in[n]$, so that  $A$ is a Kleene star, by (\ref{eqn:Kleene}). The converse is not true; for instance, $A=\left[\begin{array}{rr}
0&-2\\1&0\\
\end{array}\right]$ is a Kleene star but not a normal matrix. A NI matrix $A$ satisfies $\lambda(A)=0$, although we do not need this. 

\m
Clearly,   $A$ is normal if and only if   $C_A$ contains  the origin, in which case, by lemma \ref{lem:2}, $\spann(A)$ does too.
Informally speaking, a matrix $A$ is normal if the columns  of $A_0$ are set around the origin of $\R^{n-1}$, and they follow a precise order ---and this order is a kind of orientation in $\R^{n-1}$.

\m
Due to the \emph{Hungarian method}\label{dfn:hungarian} (see \cite{Kuhn,Papa}), any order $n$ real matrix $A$ can be normalized, meaning that there exist (non necessarily unique) order $n$ matrices $P,Q,N$ such that $N=QAP$ and $N$ is normal. Moreover, $\spann(N)$ has the same properties of $\spann(A)$, since multiplication by $P$  amounts to a relabeling of columns, and multiplication by $Q$  amounts to performing a translation. (Here are a  few words on the properties of $P$ and $Q$. The matrices $P$ and $Q$ are \emph{generalized permutation matrices}. Here we extend $\R$ to $\realamp$.  A \emph{diagonal} matrix is $D=(d_{ij})$ with $d_{ii}\in\R$ and $d_{ij}=-\infty$.  In particular, if $d_{ii}=0$ for all $i\in[n]$, we get a matrix which acts as an \emph{identity} for  matrix multiplication, since $-\infty$ acts as a neutral element for $\oplus=\max$. A \emph{generalized permutation matrix} is the result of applying a permutation $\sigma$ to the rows and columns of a diagonal matrix).  We need \emph{not} use matrices over $\realamp $ in this paper, because when normalizing  a given $A\in \mo$, every instance of $-\infty$ in the matrices $P$ and $Q$ above can be replaced by $-t$, for some real number $t\ge0$ big enough, yielding real matrices $P'$ and $Q'$ with $N=Q'AP'$; see remark 2 in p. \pageref{dfn:hungarian2} for a bound on $t$.)   The matrix $N$ can be obtained from $A$ with $O(n^3)$ elementary tropical operations (max and +); see \cite{Butkovic_Libro} and therein.

\m
A pioneer paper dealing with normal matrices is \cite{Yoeli} (although another terminology is used there).
If $A$ is normal, then clearly $A\le A^2\le A^3\le\ldots$ and Yoeli proved in \cite{Yoeli} that $A^{n-1}=A^{n}=A^{n+1}=\cdots$, so that $A^{n-1}$ is NI, so is a Kleene star. Denote this matrix by $A^*$ and call it \emph{the Kleene star of $A$}. More generally, for any real square matrix $A$, define $A^*$ as   $A\oplus A^2\oplus A^3\oplus\cdots$, if this limit  exists in $\R^{n\times n}$.

\begin{lem}\label{lem:mejora}
If $A^*$ exists, then $C_A=C_{A^*}$.
\end{lem}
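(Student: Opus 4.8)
The plan is to reduce the whole statement to a comparison of the defining inequalities of $C_A$ and $C_{A^*}$, after rewriting those inequalities in a single uniform shape, and then to control all tropical powers of $A$ simultaneously by a short induction.

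First I would record a convenient reformulation of (\ref{eqn:C_A}). Since we identify $\R^{n-1}$ with $\hip$, every $x\in\hip$ has $x_n=0$, so the bound $a_{in}\le x_i$ reads $a_{in}\le x_i-x_n$, the bound $x_i\le -a_{ni}$ reads $a_{ni}\le x_n-x_i$, and the difference bounds are already of the same form; using also $a_{ii}=0=x_i-x_i$, all the constraints merge into
\begin{equation*}
C_A=\{x\in\hip:\ a_{ik}\le x_i-x_k\ \text{for all}\ i,k\in[n]\}.
\end{equation*}
Before comparing with $C_{A^*}$ I would check that $C_{A^*}$ is legitimate, i.e.\ that $A^*\in\mo$: the existence of $A^*$ forces every closed walk of $A$ to have nonpositive weight, so $(A^*)_{ii}=a_{ii}=0$ and $A^*$ is zero--diagonal. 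Then $C_{A^*}$ admits the identical uniform description with $a_{ik}$ replaced by $(A^*)_{ik}$.

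The inclusion $C_{A^*}\subseteq C_A$ is then immediate: taking $m=1$ in $A^*=A\oplus A^2\oplus\cdots$ gives $a_{ik}\le (A^*)_{ik}$ for all $i,k$, so any $x$ satisfying $(A^*)_{ik}\le x_i-x_k$ a fortiori satisfies $a_{ik}\le x_i-x_k$, whence $x\in C_A$. The substance of the lemma is the reverse inclusion $C_A\subseteq C_{A^*}$. Here I would fix $x\in C_A$ and prove by induction on $m\ge1$ that $(A^m)_{ik}\le x_i-x_k$ for all $i,k\in[n]$. The base case $m=1$ is exactly the uniform description of $C_A$. For the inductive step I would use the tropical product rule $(A^{m+1})_{ik}=\max_{j\in[n]}\{(A^m)_{ij}+a_{jk}\}$ together with $(A^m)_{ij}\le x_i-x_j$ (induction hypothesis) and $a_{jk}\le x_j-x_k$ (membership in $C_A$), which telescope to $(A^m)_{ij}+a_{jk}\le x_i-x_k$ for every $j$, hence $(A^{m+1})_{ik}\le x_i-x_k$. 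Since $(A^*)_{ik}=\bigoplus_{m\ge1}(A^m)_{ik}$ is the supremum of these quantities, I conclude $(A^*)_{ik}\le x_i-x_k$ for all $i,k$, that is $x\in C_{A^*}$.

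Every step is elementary. I do not expect a genuine obstacle; the one point requiring care is the passage from the finitely many inequalities defining $C_A$ to simultaneous control of all the tropical powers $A^m$, and the telescoping induction handles exactly this. The only side verification worth flagging is that $A^*$ is zero--diagonal, which is what makes $C_{A^*}$ a well--defined alcoved polytope in the first place.
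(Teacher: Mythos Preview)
Your argument is correct. The core mechanism is the same as in the paper --- add the inequality $a_{ij}\le x_i-x_j$ to $a_{jk}\le x_j-x_k$ and telescope --- but your packaging is tidier. The paper first invokes the Hungarian method to assume $A$ is normal (so that $A^*=A^{n-1}$ is a finite power), then fixes one offending entry $a_{ik}<a_{ij}+a_{jk}$ at a time, subtracts the relevant pair of inequalities from the definition of $C_A$ to improve $a_{ik}\le x_i-x_k$ to $(A^2)_{ik}\le x_i-x_k$, and iterates up to $A^{n-1}$. Your uniform rewrite $C_A=\{x\in\hip:\ a_{ik}\le x_i-x_k,\ i,k\in[n]\}$ and the induction on $m$ handle all entries and all powers in one stroke, and your direct check that $(A^*)_{ii}=0$ replaces the normalization step entirely. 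This buys you something: the paper's reduction ``we may suppose $A$ is normal'' tacitly requires that the Hungarian transformation $A\mapsto N=QAP$ be compatible both with $C_{(\cdot)}$ and with $(\cdot)^*$, which is not spelled out; your route sidesteps that issue.
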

\begin{proof}
By the Hungarian method, we may suppose that $A$ is normal, so that $A^*=A^{n-1}$. Clearly, $C_A\supseteq C_{A^{n-1}}$, because $A\le A^{n-1}$. To prove the converse, assume that $A<A^2$. Then there exist pairwise different $i,j,k\in[n]$ such that $a_{ik}<a_{ij}+a_{jk}=\max_s a_{is}+a_{sk}$. Suppose that $x\in C_A$; then
\begin{align}
a_{ij}&\le x_i-x_j\le-a_{ji},\label{eqn:one}\\
a_{kj}&\le x_k-x_j\le-a_{jk},\label{eqn:twos}\\
a_{ik}&\le x_i-x_k\le-a_{ki}.\label{eqn:three}
\end{align}
Subtracting (\ref{eqn:twos}) from (\ref{eqn:one}), we get
$$(A^2)_{ik}=a_{ij}+a_{jk}\le x_i-x_k$$
which improves (\ref{eqn:three}) to
$$(A^2)_{ik}\le x_i-x_k\le-a_{ki}.\label{eqn:four}$$
 By going through every entry for which $A$ and $A^2$ differ and improving the inequalities as we just did, we get $C_A=C_{A^2}$. In a finite number of steps, we get the desired result.
\end{proof}

Lemma \ref{lem:mejora} provides a  \emph{second application} to alcoved polytopes $\PP $. A given presentation $C_A$ of   $\PP$ can be improved to a  tight presentation $\PP =C_{A^*}$.

\begin{ex}\label{ex:3}
The alcoved polytope $\PP \subset \R^2$ (see figure \ref{fig:01}, center) determined by
$$-1\le x\le 3, \quad -2\le y\le 6,\quad y-x\le 5$$
gives rise to the  matrix
$$A=\left[\begin{array}{rrr}
0&-5&-1\\-\infty&0&-2\\-3&-6&0
\end{array}\right]$$ or, in order to have a real  matrix, we can write
$$A(t)=\left[\begin{array}{rrr}
0&-5&-1\\-t&0&-2\\-3&-6&0
\end{array}\right],$$ for $t\in\R$ big enough.
Now,
$$A(t)^2=\left[\begin{array}{rrr}
0&-5&-1\\-5&0&-2\\-3&-6&0
\end{array}\right]$$ is idempotent and does not depend on $t$. Write $A(t)^2=A(t)^*=A^*$. Then, by lemma \ref{lem:mejora}, $\PP =C_{A^*}$ and $A^*$ describes $\PP $ tightly. Moreover, by theorem \ref{thm:main},
$\PP $ is  spanned by the columns of
$$(A^*)_0=\left[\begin{array}{rrr}
3&1&-1\\-2&6&-2\\0&0&0
\end{array}\right].$$
\end{ex}

Notice that in the proof of proposition 3.6 of \cite{Werner_Yu}, the authors assume that an alcoved polytope $C_A$ is described by tight inequalities and then    they show that $A$ is a Kleene star (without explicitly mentioning it).

\bigskip

We close this note by pointing out some
some nice \emph{features of normal and NI matrices.}

If $A$ is NI, then the columns of $(-A^T)_0$ are extremals of $\spann(A)\cap \{x_n=0\}$.   A proof of this fact is found in  \cite{Jimenez_P} for $n=4$, but the proof works in general.
This can be checked out in our examples \ref{ex:1} and \ref{ex:3} (see also the corresponding figures):
$$(-A^T)_0=\left[\begin{array}{rrr}
-1&2&3\\4&-2&6\\0&0&0
\end{array}\right], \quad(-(A(t)^2)^T)_0=\left[\begin{array}{rrr}
-1&3&3\\4&-2&6\\0&0&0
\end{array}\right]$$
and in example \ref{ex:2}, where the first four columns of the $4\times 13$ matrix are precisely the columns of $(-A^T)_0$.

\begin{figure}[H]
 \centering
  \includegraphics[width=6cm,keepaspectratio]{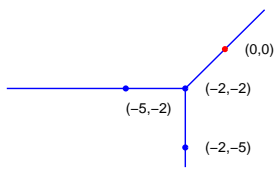}\\
  \caption{Tropical line in $\R^2$ with vertex at the point $(-2,-2)$.}
  \label{fig:03}
  \end{figure}

For $p\in\R^{n}$,  set
$$||p||:=\max_{i,j\in[n]} \{|p_i|, |p_i-p_j|\}.$$ This is a \emph{seminorm} in $\R^n$ (meaning that the property  $||\lambda+p||=|\lambda|+||p||$, for $\lambda\in\R$ is not required). The seminorm $||\cdot||$  is \emph{invariant} under the embedding of $\R^{n-1}\simeq\{x_n=0\}\subset \R^n$. It gives rise to a  \emph{semidistance} in $\R^n$ (where the property $\dd (p,q)=0\Rightarrow p=q$ is not required)
\begin{equation}\label{dfn:trop_dist}
\dd (p,q):=\max_{i,j\in[n]}\{|p_i-q_i|, |p_i-q_i-p_j+q_j|\}.
\end{equation}
This is a \emph{distance} on the hyperplane $\R^{n-1}\simeq \{x_n=0\}$! It measures the integer length (or lattice length) of the tropical segment $\spann(p,q)$.
In $\R^2\simeq \{x_3=0\}$, for example, we have $\dd ((-2,-2),(0,0))=2$ (not $2\sqrt{2}$!), $\dd ((-5,-2),(-2,-5))=\max\{3,6\}=6=3+3$  and $\dd (-5,-2),(0,0))=\max\{5,2,3\}=5=3+2$.
It is a sort of Manhattan distance; see figure \ref{fig:03}. 

\m
Define the \emph{tropical radius} of  a subset $S\subset\R^{n-1}$ containing the origin,  as follows:
\begin{equation}
\rr(S):=\sup_{s\in S}\dd (s,0)=\sup_{s\in S} ||s||.
\end{equation}

For a matrix $A$, consider
\begin{equation}\label{eqn:norm}
|||A|||:=\max_{i,j}|a_{ij}|.
\end{equation}
If $A$ is normal, then  $a_{ii}=0$ and $a_{ij}\le0$,    so that  $A\le A^{n-1}$, whence $|||A|||\ge ||| A^{n-1}|||$.

\m
Below we prove that the radius of $C_A$ equals the norm of $A$, for a NI matrix $A$.

\begin{thm}\label{thm:radius}
If $A$ is normal, then $|||A|||=\rr(\spann(A)\cap\hip)$. If, in addition,  $A$ is idempotent, then $|||A|||=\rr(C_A)$.
\end{thm}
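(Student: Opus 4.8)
The plan is to deduce the second assertion from the first. Since a normal idempotent matrix is a Kleene star (by (\ref{eqn:Kleene})), Theorem~\ref{thm:main} gives $C_A=\spann(A)\cap\hip$; hence $\rr(C_A)=\rr(\spann(A)\cap\hip)=|||A|||$ the moment the first equality is settled. So the whole weight falls on proving $|||A|||=\rr(\spann(A)\cap\hip)$ for a normal $A$. Two preliminary remarks: normality gives $a_{ii}=0$ and $a_{ij}\le 0$, so $|||A|||=-\min_{i,j}a_{ij}$; and the origin lies in $C_A\subseteq\spann(A)\cap\hip$ by Lemma~\ref{lem:2}, so the radius is well defined. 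I would establish the equality by two opposite inequalities.

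For the lower bound $\rr(\spann(A)\cap\hip)\ge|||A|||$, I would simply evaluate the seminorm on the columns of $A_0$, which all lie in $\spann(A)\cap\hip$ (they span it and satisfy $\alpha_{nj}=0$). For a point $p$ with $p_n=0$ one has $||p||=\max_{i,j}|p_i-p_j|$, the coordinate spread, because the terms $|p_i|$ equal $|p_i-p_n|$ and are already present. Applying this to $\col(A_0,j)$, whose entries are $\alpha_{ij}=a_{ij}-a_{nj}$, the scalar $a_{nj}$ cancels in every difference, so the spread equals $\max_i a_{ij}-\min_i a_{ij}$. Normality forces the column maximum to be the diagonal entry $a_{jj}=0$, hence $||\col(A_0,j)||=-\min_i a_{ij}$. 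Taking the maximum over $j$ returns $|||A|||$.

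For the upper bound $\rr(\spann(A)\cap\hip)\le|||A|||$, take any $s\in\spann(A)\cap\hip$. By (\ref{eqn:max_otra}) and (\ref{eqn:max_l}) there are scalars $\mu_k$ with $s_i=\max_{k\in[n]}\{\alpha_{ik}+\mu_k\}$ for every $i$ (the case $i=n$ reading $s_n=\max_k\mu_k=0$). Since $s_n=0$, again $||s||=\max_{i,j}(s_i-s_j)$. For fixed $i,j$ choose an index $k$ attaining the maximum defining $s_i$; then $s_j\ge\alpha_{jk}+\mu_k$, so $s_i-s_j\le\alpha_{ik}-\alpha_{jk}=a_{ik}-a_{jk}\le|||A|||$, using $a_{ik}\le 0$ and $a_{jk}\ge-|||A|||$. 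Hence $||s||\le|||A|||$, and combining the two bounds finishes the normal case.

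The main point to watch is that, for a normal but non-idempotent $A$, the set $\spann(A)\cap\hip$ is genuinely larger than the polytope $C_A$ (Example~\ref{ex:3} already exhibits points of the span outside $C_A$), so one cannot read the radius off the inequalities defining $C_A$; in particular $\max_i s_i$ need not vanish for $s$ in the span. The device that circumvents this is the ``choose the maximizing index $k$'' step above, which bounds each coordinate difference $s_i-s_j$ by a single entry difference $a_{ik}-a_{jk}$, and hence by $|||A|||$ directly, without any appeal to the facial structure of $C_A$.
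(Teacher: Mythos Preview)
Your argument is correct. The overall architecture matches the paper's: the second assertion is reduced to the first via Theorem~\ref{thm:main}, the lower bound $|||A|||\le\rr(\spann(A)\cap\hip)$ is obtained by computing $\max_j||\col(A_0,j)||$ and seeing it equals $|||A|||$ (this is exactly the paper's equation~(\ref{eqn:igual})), and then an upper bound is supplied.

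Where you diverge is in the upper bound. The paper argues pairwise: for two columns $p,y$ of $A_0$ and a tropical combination $z=p\oplus(\mu+y)$ with $\max\{0,\mu\}=0$, it reuses the case analysis of Lemma~\ref{lem:3} to conclude $||z||\le\max\{||p||,||y||\}$, and then (implicitly) iterates to reach arbitrary combinations. You instead work directly with the full representation $s_i=\max_k\{\alpha_{ik}+\mu_k\}$: picking a maximizing index $k$ for $s_i$ yields $s_i-s_j\le\alpha_{ik}-\alpha_{jk}=a_{ik}-a_{jk}\le -a_{jk}\le|||A|||$ in one stroke. This is cleaner---it handles all $n$ generators simultaneously and avoids the inductive wrapping---at the price of not exhibiting the monotonicity ``tropical combination does not increase the seminorm'' that the paper's two-point argument makes visible. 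Either route is short; yours is the more direct of the two.
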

\begin{proof}
We only need prove the first statement.

We know that  $A_0=(\alpha_{ij})$, with $\alpha_{ij}=a_{ij}-a_{nj}$.
Assume that $A=(a_{ij})$ is normal (i.e., $a_{ii}=0$ and $a_{ij}\le0$). We first prove that
\begin{equation}\label{eqn:igual}
|||A|||=\max_{k\in[n]}||\col(A_0,k)||.
\end{equation}
To do so, write $M$ for the maximum on the right hand side.
We have
\begin{equation}\label{eqn:maxima}
M=\max_{i,j,k\in[n]}\{|\alpha_{ik}|, |\alpha_{ik}-\alpha_{jk}|\}=\max_{i,j,k\in[n]} |a_{ik}-a_{jk}|.
\end{equation}
Using $a_{ii}=0$, we get $|||A|||\le M$. On the other hand, the maximum on the right hand side of (\ref{eqn:maxima}) cannot be achieved for mutually different $i,j,k$ since $a_{ik}\le0$ and $a_{jk}\le0$; thus we get $|||A|||=M$.


From equalities (\ref{eqn:span}) and  (\ref{eqn:igual}), we obtain $|||A|||\le\rr(\spann(A)\cap\hip)$.

Now, assume that $p,y$ are two columns of $A_0$ and let $z=\lambda+p\oplus \mu +y$, with $z_n=0=\max\{\lambda,\mu\}$. Say $\lambda=0$. Then
$$z_j=\max\{p_j,\mu+y_j\}\le\max\{p_j,y_j\}\le\max\{|p_j|,|y_j|\}\le \max\{||p||,||y||\}.$$ Besides, by the same argument used in the proof of lemma \ref{lem:3}, we get $p_i-p_k\le z_i-z_k\le y_i-y_k$,
proving that $||z||\le \max\{||p||,||y||\}\le M=|||A|||$.
\end{proof}

\m
Remark 1: It is easy to check that (\ref{eqn:norm}) defines a matrix norm on $\mo$ endowed with $\oplus$, $\odot$, but we do not use it here.

Remark 2: \label{dfn:hungarian2} In the Hungarian method mentioned in p. \pageref{dfn:hungarian}, it is customary to write matrices $P,Q$ with entries in $\realamp$, while $A,N$ are real. However, every instance of $-\infty$ in $P,Q$ can be replaced by $-t\in\R$, with $t>\!\!>|||A|||, |||N|||$, getting  $P',Q'$  real such that $N=Q'AP'$.

Remark 3: In \cite{Cuninghame_B,Sergeev_def}, the range seminorm $\tau$ in $\R^n$ is introduced as follows: $\tau(p)=\max_{i,j\in[n]} p_i-p_j=\max_{i,j\in[n]} |p_i-p_j|$. In general, $\tau(p)\le ||p||$.
The seminorm $\tau$  is \emph{not} invariant under the embedding of $\R^{n-1}\simeq\{x_n=0\}\subset \R^n$.
The range seminorm  gives rise to a semidistance, used in \cite{Cohen,Sergeev_def}, and denoted $\dH $. The distances induced by $\dd $ and $\dH $ on $\hip$ coincide. It is  a tropical version of  Hilbert's projective distance.

\begin{ex}\label{ex:4}
Let  $$B=\left[\begin{array}{rrrr}
    0&-6&-10&-5\\
    -9&0&-5&-3\\
    -3&-5&0&-6\\
    -5&-3&-6&0\\
    \end{array}\right],$$
    then $B^2=A$ of example \ref{ex:2} and $\spann(B)$ is not convex. We have $|||B|||=|||A|||=10$  so that the sets $\spann(B)\cap\{x_4=0\}$ and $C_A$ have both  radius 10.
\end{ex}

\begin{ex}\label{ex:5}
Returning to example \ref{ex:3}, the radius of  $\spann A(t)\cap\{x_3=0\}$ is $t$, for $t\ge6$, while  the radius of $C_{A(t)}=C_{A^*}$ is $6$. This is clear from  figure \ref{fig:01} right, where the non--convex set $\spann A(t)$ has an arbitrary long \lq\lq antenna".
\end{ex}

\m

Remark 4: In section 4 of \cite{Sergeev_def}, Sergeev computes the radius of a $\dH$--ball inscribed in $\spann(A)$. Sergeev computes the biggest ball fitting inside  $\spann (A)$ and we compute a ball centered at the origin and containing $\spann (A)$; see figure \ref{fig:04}.

\begin{figure}[H]
 \centering
  \includegraphics[width=10cm,keepaspectratio]{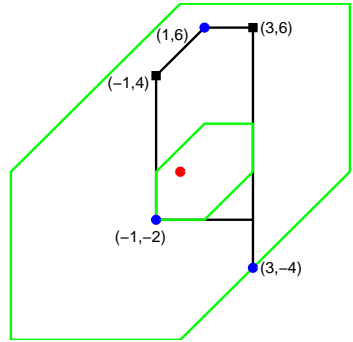}\\
  \caption{$\spann A(7)$ from example \ref{ex:3} (in black) and  balls of radius 2 and 7 fitting inside and outside (in green).}
  \label{fig:04}
  \end{figure}

\section*{Acknowledgements}
I am indebted to S. Sergeev for very helpful discussions and to two referees for taking an interest on this paper and pointing out some ways of improving the manuscript.

\centerline{\footnotesize{M. J. de la Puente. Dpto. de Algebra. Facultad de Matem\'{a}ticas. Universidad Complutense. Madrid. Spain. \texttt{mpuente@mat.ucm.es}}}

\end{document}